\numberwithin{equation}{section}
\newtheorem{theorem}{Theorem}[section]
\newtheorem{conjecture}[theorem]{Conjecture}
\newtheorem{corollary}[theorem]{Corollary}
\newtheorem{prop}[theorem]{Proposition}
\def \begineq{\begin{equation}}
\def \endeq{\end{equation}}
\def \bb{\mathbb}
\def \NN{{\bb{N}}}
\def \RR{{\bb{R}}}
\def \ZZ{{\bb{Z}}}
\def \({\left(}
\def \){\right)}
\def \<{\langle}
\def \>{\rangle}
\def \bar{\overline}
\def \deg{\mathrm{deg}}
\def \eps{\epsilon}
\def \om{{\omega}}
\def \Om{{\Omega}}
\def \Cc{\mathcal{C}}
\def \Si{{\Sigma}}
\def \ad{{\rm ad}}
\def \Aut{{\rm Aut}}
\def \End{{\rm End}}
\def \Ham{{\rm Ham}}
\def \Hom{{\rm Hom}}
\def \id{{\rm id}}
\def \tr{{\rm tr}}
\def\scaledpicture#1by#2(#3scaled#4){{
\dimen0=#1  \dimen1=#2
\divide\dimen0 by 1000 \multiply\dimen0 by #4
\divide\dimen1 by 1000 \multiply\dimen1 by #4
\picture \dimen0 by \dimen1 (#3 scaled #4)}}
\def\dfigure#1by#2(#3scaled#4offset#5:#6)
\def\resto#1#2{{
#1\hskip 0.4ex\vline_{\hskip 0.2ex\raisebox{-0,2ex}
{{${\scriptstyle #2}$}}}}}
\def\N{{\mathbb N}}
\def\R{{\mathbb R}}
\def\Z{{\mathbb Z}}
\def\qed {\hfill\vrule height6pt width6pt depth0pt \smallskip}
\def\textmap#1{\mathop{\vbox{\ialign{
                                  ##\crcr
      ${\scriptstyle\hfil\;\;#1\;\;\hfil}$\crcr
      \noalign{\kern 1pt\nointerlineskip}
      \rightarrowfill\crcr}}\;}}
\def\bigtextmap#1{\mathop{\vbox{\ialign{
                                  ##\crcr
      ${\hfil\;\;#1\;\;\hfil}$\crcr
      \noalign{\kern 1pt\nointerlineskip}
      \rightarrowfill\crcr}}\;}}
\newcommand{\Cal}{\cal}
\newcommand{\cal}{\mathcal}
\def\textlmap#1{\mathop{\vbox{\ialign{
                                  ##\crcr
      ${\scriptstyle\hfil\;\;#1\;\;\hfil}$\crcr
      \noalign{\kern-1pt\nointerlineskip}
      \leftarrowfill\crcr}}\;}}
\def\fg{{\mathfrak f}}
\def\g{{\mathfrak g}}
\newtheorem{sz}{Satz}[section]
\newtheorem{thry}[sz]{Theorem}
\newtheorem{re}[sz]{Remark}
\newtheorem{lm}[sz]{Lemma}
\begin{document}
 
\def\tr{\mathrm {Tr}}
\def\End{\mathrm {End}}
\def\Aut{\mathrm {Aut}}
\def\Spin{\mathrm {Spin}}
\def\U{\mathrm{U}}
\def\SU{\mathrm {SU}}
\def\SO{\mathrm {SO}}
\def\PU{\mathrm {PU}}
\def\GL{\mathrm {GL}}
\def\spin{\mathrm {spin}}
\def\u{\mathrm {u}}
\def\su{\mathrm {su}}
\def\so{\mathrm {so}}
\def\pu{\mathrm {pu}}
\def\Pic{\mathrm {Pic}}
\def\Iso{\mathrm {Iso}}
\def\NS{\mathrm{NS}}
\def\deg{\mathrm {deg}}
\def\Hom{\mathrm{Hom}}
\def\Herm{\mathrm{Herm}}
\def\Vol{{\rm Vol}}
\def\pf{{\bf Proof: }}
\def\id{ \mathrm{id}}
\def\Im{\mathrm{Im}}
\def\im{\mathrm{im}}
\def\rk{\mathrm {rk}}
\def\ad{\mathrm {ad}}
\def\spc{\mathrm{Spin}^c}
\def\U2{\mathrm{U(2)}}
\def\niq{=\kern-.18cm /\kern.08cm}
\def\Ad{\mathrm {Ad}}
\def\RSU{\R\mathrm{SU}}
\def\ad{{\rm ad}}
\def\dva{\bar\partial_A}
\def\da{\partial_A}
\def\p{{\rm p}}
\def\sp{\Sigma^{+}}
\def\sm{\Sigma^{-}}
\def\spm{\Sigma^{\pm}}
\def\smp{\Sigma^{\mp}}
\def\oo{{\scriptstyle{\cal O}}}
\def\ooo{{\scriptscriptstyle{\cal O}}}
\def\sw{Seiberg-Witten }
\def\pa{\partial_A\bar\partial_A}
\def\Dr{{\raisebox{0.17ex}{$\not$}}{\hskip -1pt {D}}}
\def\gr{{\scriptscriptstyle|}\hskip -4pt{\g}}
\def\subsetint{{\  {\subset}\hskip -2.45mm{\raisebox{.28ex}
{$\scriptscriptstyle\subset$}}\ }}

\def\nat{\Psi_L}
\def\comp{\mathrm{comp}}
\def\cm{MC} \def\cf{FC} \def\hm{MH} \def\hf{FH}
\def\pss{PSS}
\def\co{\colon\thinspace}

\def \qed{\hfill $\square$ \vspace{0.03in}}

\newcommand\rlabel[1]{\label{#1}\marginpar{{\it #1}}}







\title{The $g$-areas and the commutator length}

\author[Fran\c{c}ois Lalonde]{Fran\c{c}ois Lalonde}
\address{D\'epartement de Math\'ematiques et de Statistique, Universit\'e de Montr\'eal, C.P. 6128 Succ. Centre-ville,
Montr\'eal (Qu\'ebec) H3C 3J7, 
Canada  }
\email{lalonde@dms.umontreal.ca}

\author[Andrei Teleman]{Andrei Teleman}
\address{UMR-CNRS 6632 (LATP)
CMI, Aix-Marseille Universit\'e,
13453 Marseille Cedex 13, 
France
 }
\email{andrei.teleman@univ-amu.fr }

\date{\today}

\subjclass[2000]{53d12 (Primary) 53D40, 37J45 (Secondary)}

\bigskip

\begin{abstract} The commutator length of a Hamiltonian diffeomorphism $f\in \Ham(M, \om)$ of  a closed symplectic manifold $(M,\omega)$ is by definition the minimal $k$ such that $f$ can be written as a product of $k$ commutators in $\Ham(M, \om)$.  We introduce a new invariant for Hamiltonian diffeomorphisms, called the $k_+$-area, which measures the ``distance", in a  certain sense, to the subspace ${\cal C}_k$  of  all products of  $k$ commutators. Therefore this invariant can be seen as the obstruction to writing a given  Hamiltonian diffeomorphism as a product of $k$ commutators.
We also consider an infinitesimal version of the commutator problem: what is the obstruction to writing a Hamiltonian vector field as a linear combination of $k$ Lie brackets of Hamiltonian vector fields? A natural problem related to this question is to describe  explicitly, for every fixed $k$, the set of linear combinations of $k$ such Lie brackets. The problem can be obviously reformulated in terms of Hamiltonians and Poisson brackets. For a given Morse function  $f$ on a symplectic Riemann surface $M$ (verifying a weak genericity condition) we describe the linear space of  commutators of the form $\{f,g\}$, with $g\in{\cal C}^\infty(M,\R)$.
\\ \\
{\it Keywords : Hamiltonian diffeomorphism, commutator, Hofer norm, infinitesimal commutator, cohomological equation}
\end{abstract}

\maketitle

\section{Introduction}

    Let $(M, \om)$ be a closed symplectic manifold and $\Si_g$ the real oriented compact  surface with boundary obtained by removing an open disc from the real compact surface without boundary of genus $g$. So, in other terms, $\Si_g$ is obtained by attaching $g$ handles to the closed unit disk. Now let $(M, \om) \hookrightarrow (P, \Om) \stackrel{\pi}{\to} \Si_g$ be a Hamiltonian fibration with fiber $(M, \om)$ and base $\Si_g$. This means that $P$ is a smooth fibration with fiber $M$ over $\Si_g$ and structure group the group of Hamiltonian diffeomorphisms $\Ham(M,\omega)$ of $M$, and the connection defined by $\Omega$ is compatible  with this structure group (see \cite{LM}). This is equivalent to asking that the symplectic form $\Om$ on $P$ be ruled (which means that $\Om$ restricts on each fiber to a symplectic form isotopic to $\om$) with monodromy around any loop of the base belonging to $\Ham(M, \om)$. 
    
    Recall that $\Ham(M, \om)$ is simple, so the subgroup generated by all products of commutators in $\Ham(M, \om)$ must be the whole group, which means that every Hamiltonian diffeomorphism of $M$ can be written as a product of commutators.  The {\it commutator length} of $f \in \Ham(M,\om)$ is by definition the smallest $k$ such that $f$ can be written as a product $[\phi_1, \psi_1] \ldots [\phi_k, \psi_k] $ of $k$ commutators.
    
    Now let $(P, \Om) \stackrel{\pi}{\to} \Si_g$ be a ruled symplectic manifold with Hamiltonian monodromies. Because $\Om$ is non-degenerate on the fibers, the kernel  $K$ of the restriction $\Om |_{\partial \Si_g}$ of $\Om$ to the boundary $\partial \Si_g \simeq S^1$ is transversal to the fibers of the projection $ \pi^{-1}(\partial \Si_g) \to \partial \Si_g$. If $b_0$ is a base point belonging to $\partial \Si_g$, the monodromy of $K$ round $\partial \Si_g$ defines a symplectic diffeomorphism of $(M_{b_0}, \om_{b_0})$ which is Hamiltonian by hypothesis. Conversely, any Hamiltonian diffeomorphism $f$ of $(M, \om) \simeq (M_{b_0}, \om_{b_0})$ can be obtained as monodromy of a ruled symplectic fibration $(M, \om) \hookrightarrow (P, \Om) \stackrel{\pi}{\to} \Si_g$. This can be easily seen by first representing $f$ as monodromy of a ruled $(M, \om)$-fibration $(P, \Om)$ over $\Si_0$, then by trivialising $(P, \Om)$ over a small neighbourhood $U$ of an interior point $b$ of $\Si_0$ and by replacing finally $P |_U$ by $(M, \om) \times \Si_g$.
    
    We define the $g_+$-{\em area} $\| f \|_g^+$ as the infimum of the quotient
    $$
    \frac{vol (P, \Om)}{vol (M, \om)}
    $$
    when $(P, \Om)$  runs over all Hamiltonian fibrations
    $$
    (M, \om) \hookrightarrow (P, \Om) \stackrel{\pi}{\to} \Si_g
    $$
    whose monodromies round $\partial \Si_g$ equal $f$. Similarly, we define $\| f \|_g^-$ as $\| f^{-1} \|_g^+$, which means  filling $P |_{\partial \Si_g}$ from the other side with a copy of $\Sigma_g$. Finally, we set:
    $$
        \| f \|_g = \| f \|_g^+  + \| f \|_g^-.   
    $$    
        It is easy to see that $ \| f \|_0 $ is less or equal to the positive Hofer norm of $f$ (see \cite{H, LM1} for details on the Hofer norm). 
        
        The first result of this article is the following:
        
        \begin{theorem} \label{thm:theorem1} If $f$ is a product of $k$ commutators $ f = [\phi_1, \psi_1] \ldots [\phi_k, \psi_k]$, then
        $$
        \| f \|_g^+ = 0 
        $$
        for all $g \geq k$.
        \end{theorem}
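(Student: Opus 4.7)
The plan is to show $\|f\|_g^+\leq \epsilon$ for every $\epsilon>0$ by exhibiting a Hamiltonian fibration $(M,\om)\hookrightarrow (P,\Om)\stackrel{\pi}{\to}\Si_g$ with monodromy $f$ around $\partial\Si_g$ and volume ratio equal to $\epsilon$. The construction exploits the fact that $\Si_g$ deformation retracts onto a $1$-dimensional spine, so any representation of $\pi_1(\Si_g)$ into $\Ham(M,\om)$ is realized as the holonomy of a flat Hamiltonian bundle.

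\textbf{Step 1 (flat bundle).} Fix a basepoint $b_0\in \partial \Si_g$. The group $\pi_1(\Si_g, b_0)$ is free on $2g$ generators $a_1, b_1, \ldots, a_g, b_g$, and the class of the boundary loop equals $\prod_{i=1}^g [a_i, b_i]$. Using $g\geq k$, define $\rho\co\pi_1(\Si_g, b_0)\to \Ham(M,\om)$ by $\rho(a_i)=\phi_i$, $\rho(b_i)=\psi_i$ for $i\leq k$ and $\rho(a_i)=\rho(b_i)=\id$ for $k<i\leq g$. Then $\rho$ sends the boundary loop to $\prod_{i=1}^k[\phi_i,\psi_i]=f$, and the associated flat bundle $P_\rho := (\widetilde{\Si_g}\times M)/\pi_1(\Si_g)$ is a smooth bundle with structure group $\Ham(M,\om)$ whose monodromy around $\partial \Si_g$ equals $f$.

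\textbf{Step 2 (symplectic form).} The flat horizontal distribution, together with $\om$ on the fibres, determines a unique closed $2$-form $\om_{\mathrm{fib}}$ on $P_\rho$ that restricts to $\om$ on each fibre and annihilates horizontal vectors. Pick any area form $\sigma$ on $\Si_g$ with $\int_{\Si_g}\sigma=\epsilon$ and set
$$\Om := \om_{\mathrm{fib}} + \pi^*\sigma.$$
A one-line check shows that the $\Om$-orthogonal complement of each fibre equals the original flat horizontal distribution: splitting $X = X_h + X_v$ and using that $\pi^*\sigma$ and $\om_{\mathrm{fib}}(X_h,\cdot)$ both vanish against vertical vectors, one obtains $\Om(X,Y)=\om(X_v,Y)$ for vertical $Y$, and nondegeneracy of $\om$ forces $X_v=0$. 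Consequently $\Om$ is nondegenerate, the symplectic connection it induces coincides with the flat one, and the monodromy of $(P_\rho,\Om)$ around $\partial \Si_g$ remains $f$. So $(P_\rho,\Om)$ is admissible in the infimum defining $\|f\|_g^+$.

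\textbf{Step 3 (volume).} Since $\sigma\wedge \sigma = 0$ on the $2$-dimensional base and $\om_{\mathrm{fib}}^{n+1} = 0$ on the $2n$-dimensional fibres, only one term survives in the binomial expansion:
$$\frac{\Om^{n+1}}{(n+1)!}=\frac{\om_{\mathrm{fib}}^n}{n!}\wedge \pi^*\sigma.$$
Fibrewise integration yields $\vol(P_\rho,\Om) = \vol(M,\om)\cdot \int_{\Si_g}\sigma = \epsilon\cdot \vol(M,\om)$, hence $\|f\|_g^+\leq \epsilon$. Letting $\epsilon\to 0$ finishes the proof.

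The only subtle point is the identification in Step 2 of the $\Om$-orthogonal to the fibres with the flat horizontal distribution; once this is recorded, the rest is essentially bookkeeping, and the apparent rigidity of the problem dissolves because one is free to shrink the base area without changing the monodromy of a flat bundle.
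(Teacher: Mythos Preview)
Your proof is correct and follows essentially the same idea as the paper's argument: realize $f$ as the boundary holonomy of a \emph{flat} Hamiltonian bundle over $\Sigma_g$ and then observe that, for a flat connection, the ruled symplectic form $\omega_{\mathrm{fib}}+\pi^*\sigma$ has volume ratio exactly $\int_{\Sigma_g}\sigma$, which can be made arbitrarily small. The paper carries out the same construction by hand---gluing two annuli into a thickened $1$-skeleton of the punctured torus, cutting and regluing by $\phi$ and $\psi$, and then taking connected sums for products of commutators---whereas you package it as the associated bundle of the representation $\rho:\pi_1(\Sigma_g)\to\Ham(M,\omega)$ sending the standard generators to $\phi_i,\psi_i$. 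Your formulation is cleaner and makes the role of the relation $\partial\Sigma_g=\prod[a_i,b_i]$ in $\pi_1$ transparent; the paper's cut-and-paste is more concrete and also serves as the building block for the stronger Theorem~\ref{thm:theorem2} (gluing a small-area genus-$k$ piece onto a disk piece realizing $fh^{-1}$). One cosmetic point: when you write ``$\omega_{\mathrm{fib}}^{n+1}=0$ on the $2n$-dimensional fibres'', what you mean (and use) is that $\omega_{\mathrm{fib}}$ has the horizontal plane in its kernel at every point of $P$, so its rank is $2n$ and $\omega_{\mathrm{fib}}^{n+1}=0$ on $P$; the argument goes through as stated.
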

        
           Actually, we will prove more. Let us denote by $\Cc_k$ the subset of $\Ham(M, \om)$ of all products of $k$ commutators of Hamiltonian diffeomorphisms. By convention, $\Cc_0$ contains only the identity. Then obviously $\Cc_0 \subset \Cc_1 \subset \ldots$ , $\cup_{k \geq 0} \Cc_k = \Ham(M,\omega)$ and the following theorem holds:
           
           \begin{theorem} \label{thm:theorem2} For all $f \in \Ham(M,\omega)$,
           $$
           \|f \|_k^+  \leq  d_0^+ (f, \Cc_k)
           $$
           where $d_0^+ (f, \Cc_k)$ is the infimum over $h \in \Cc_k$ of $d_0^+(f,h)$ and where $d_0^+(f,h)$ is $\|fh^{-1} \|_0^+$.
           \end{theorem}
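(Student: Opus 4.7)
The plan is to realise $f$ as the monodromy of a $\Si_k$--fibration of volume close to $\|fh^{-1}\|_0^+\cdot\vol(M,\om)$, built by pasting three pieces: a $\Si_k$--fibration with monodromy $h$ of negligible volume (available because $h\in\Cc_k$ implies $\|h\|_k^+=0$ by Theorem~\ref{thm:theorem1}), a disk fibration with monodromy $fh^{-1}$ of volume close to $\|fh^{-1}\|_0^+\cdot\vol(M,\om)$ (supplied directly by the definition of $\|\cdot\|_0^+$), and a thin pair-of-pants fibration realising the product relation $f=(fh^{-1})\cdot h$. Letting $\eps\to 0$ and then taking the infimum over $h\in\Cc_k$ will yield the theorem.

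Concretely, fix $\eps>0$ and $h\in\Cc_k$. Decompose $\Si_k$ topologically as $\Si_k=\Si'_k\cup_{\gamma_3}P\cup_{\gamma_2}D'$, where $P$ is a pair of pants whose three boundary circles are $\gamma_3$, $\gamma_2$, and $\partial\Si_k$; here $\Si'_k$ (a smaller copy of $\Si_k$) is attached along $\gamma_3$ and $D'$ is a disk attached along $\gamma_2$ (an Euler characteristic count confirms such a decomposition exists). Install over $\Si'_k$ a Hamiltonian fibration with monodromy $h$ and volume $<\eps\vol(M,\om)$, and over $D'$ a Hamiltonian fibration with monodromy $fh^{-1}$ and volume $<(\|fh^{-1}\|_0^++\eps)\vol(M,\om)$. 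Over $P$, construct a Hamiltonian fibration of volume $<\eps\vol(M,\om)$ with boundary monodromies $h$, $fh^{-1}$, $f$ on $\gamma_3$, $\gamma_2$, $\partial\Si_k$, the two inner circles being ordered so that the $\pi_1(P)$ relation reproduces $f=(fh^{-1})\cdot h$. Glue the three pieces along collars of $\gamma_2$ and $\gamma_3$ by interpolating the connection $1$--forms in a thin annular region; additivity of $\int\Om^{n+1}$ over the pieces shows the resulting $\Si_k$--fibration has monodromy $f$ and normalised volume $<\|fh^{-1}\|_0^++3\eps$, giving $\|f\|_k^+\leq\|fh^{-1}\|_0^++3\eps$.

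The main obstacle is the thin pair-of-pants construction: for any $\phi_1,\phi_2\in\Ham(M,\om)$, one must produce a Hamiltonian fibration over a pair of pants with boundary monodromies $\phi_1$, $\phi_2$, $\phi_1\phi_2$ whose symplectic volume can be made arbitrarily small. This is a standard construction underlying the triangle-type inequalities for the Hofer norm: specify generating Hamiltonians for $\phi_1,\phi_2$ on collars of two waist circles, extend the coupling form across the thin middle of $P$ while keeping its horizontal component uniformly small, and match the connections on the gluing collars $\gamma_2,\gamma_3$ by a cut-off without adding a definite amount of volume. Once this is in place, the remainder of the proof is routine bookkeeping in $\eps$.
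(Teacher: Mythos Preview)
Your argument is essentially the paper's own: build a small-area genus-$k$ fibration with monodromy $h$, a disk fibration with monodromy $fh^{-1}$ of area close to $\|fh^{-1}\|_0^+$, and glue so that the boundary monodromy becomes $f$. Two minor points are worth noting. First, you invoke Theorem~\ref{thm:theorem1} to obtain the small $\Si_k$--fibration for $h\in\Cc_k$, but the paper presents Theorem~\ref{thm:theorem1} as a \emph{corollary} of Theorem~\ref{thm:theorem2}; to avoid circularity you should instead cite the explicit handle construction (two crossed annuli with cut-and-reglue by $\phi_i,\psi_i$) that the paper carries out as the first step of the proof --- this is logically independent of Theorem~\ref{thm:theorem2} and is exactly what you need. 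Second, your three-piece decomposition with a separate thin pair-of-pants is correct but heavier than necessary: the paper simply takes the boundary connected sum of the disk fibration $P'\to\Si_0$ and the genus-$k$ fibration $P''\to\Si_k$ near the base point $p_0$, identifying the (trivialised) fibers there; since $\Si_0\natural\Si_k\cong\Si_k$ and the monodromies concatenate to $(fh^{-1})h=f$, no intermediate pair-of-pants piece is needed and no extra area is introduced.
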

           
           The first theorem is clearly a corollary of the second one. It will therefore be enough to establish the latter one.  Note that the positive 0-area appearing in the right hand side of the above theorem is smaller or equal to the positive Hofer norm $\| \cdot \|_H^+$. Thus we get the following corollary that relates our $g$-area to the positive Hofer norm :
           
           \begin{corollary}  For all $f \in \Ham(M,\omega)$,
           $$
           \|f \|_k^+  \leq  d_H^+ (f, \Cc_k)
           $$
           where $d_H^+ (f, \Cc_k)$ is the infimum over $h \in \Cc_k$ of $d_H^+(f,h)$ and where $d_H^+(f,h)$ is $\|fh^{-1} \|_H^+$.
\end{corollary}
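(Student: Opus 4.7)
The plan is to derive the corollary immediately from Theorem \ref{thm:theorem2} combined with the pointwise inequality $\|\psi\|_0^+ \leq \|\psi\|_H^+$ valid for every $\psi \in \Ham(M,\om)$, a fact the authors flag as ``easy to see'' in the paragraph preceding Theorem \ref{thm:theorem1}. Taking these two ingredients as given, the corollary follows in two lines, and no new geometric input is required.

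Concretely, Theorem \ref{thm:theorem2} yields
$$
\|f\|_k^+ \leq d_0^+(f,\Cc_k) = \inf_{h\in\Cc_k} \|fh^{-1}\|_0^+ .
$$
For each fixed $h\in\Cc_k$, the pointwise inequality applied to $\psi = fh^{-1}$ gives $\|fh^{-1}\|_0^+ \leq \|fh^{-1}\|_H^+$. Passing to the infimum over $h\in\Cc_k$ preserves the inequality, so
$$
\inf_{h\in\Cc_k}\|fh^{-1}\|_0^+ \leq \inf_{h\in\Cc_k}\|fh^{-1}\|_H^+ = d_H^+(f,\Cc_k),
$$
and chaining the two bounds gives $\|f\|_k^+ \leq d_H^+(f,\Cc_k)$, which is the assertion.

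If one wishes a self-contained argument, the only missing step is the justification of $\|\psi\|_0^+ \leq \|\psi\|_H^+$. For this I would proceed as follows: given a time-dependent Hamiltonian $H_t$ generating $\psi$, consider the trivial $M$-bundle $M \times D^2 \to D^2 = \Si_0$ and endow it with a closed 2-form $\Om$ whose restriction to each fiber equals $\om$ and whose horizontal distribution is generated by the lift of a suitable radial cutoff of $H_t$, so that the monodromy around $\partial D^2$ coincides with $\psi$. A direct computation shows that $\mathrm{vol}(P,\Om)/\mathrm{vol}(M,\om)$ can be made arbitrarily close to $\int_0^1 \max_M H_t \, dt$, and minimizing over generating Hamiltonians yields the asserted inequality.

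I expect no genuine obstacle: the corollary is a formal consequence of Theorem \ref{thm:theorem2}, and the comparison between the $0_+$-area and the positive Hofer norm is a standard mapping-torus-with-filling construction of the sort recalled in \cite{LM, LM1}. The only mild subtlety, which does not arise here, would lie in choosing the cutoff so that $\Om$ remains symplectic while its total volume stays close to the $L^{1,\infty}$-integral of the maximum of $H_t$.
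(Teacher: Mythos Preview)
Your proposal is correct and follows exactly the paper's own route: the corollary is deduced immediately from Theorem~\ref{thm:theorem2} together with the comparison $\|\cdot\|_0^+ \leq \|\cdot\|_H^+$, which the paper states just before the corollary. The additional sketch you give for the comparison (a mapping-torus/filling construction over the disc) is more than the paper itself provides, and is the standard argument.
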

           
           It is clear that the positive g-area is far from being a norm on the group of Hamiltonian diffeomorphisms. However, the natural object to consider is the following polynomial, that we call the {\em positive total Hofer norm} defined by:
           $$
           \|f \|_T^+ = \sum_{k \geq 0}  \|f \|_k^+  t^k.
           $$
           We define in a similar way the total negative norm. Let us denote by $\star$ the operation on polynomials obtained from the ordinary product by replacing in each coefficient the sum by the minimum and the product by the sum. Explicitely, set
           $$
           (\sum a_k t ^k) \star (\sum b_k t ^k) = \sum c_k t ^k
           $$
           where 
           $$
           c_k = \min_{k_1 + k_2 =k} (a_{k_1} + b_{k_2}).
           $$
           
           \begin{prop} For any $f, h \in Ham(M,\om)$, 
           $$
           \| f \circ h \|_T^+     \leq      \|f \|_T^+   \star   \|h \|_T^+.
           $$
           \end{prop}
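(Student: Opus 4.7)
The plan is to unpack the $\star$-product and reduce the statement to a coefficient-wise inequality, then realise $f \circ h$ as the boundary monodromy of a Hamiltonian fibration over $\Si_{k_1+k_2}$ obtained by gluing near-optimal fibrations for $f$ and $h$ through an auxiliary pair of pants fibration of arbitrarily small volume. Since the coefficient of $t^k$ in $\|f\|_T^+ \star \|h\|_T^+$ is $\min_{k_1+k_2=k}(\|f\|_{k_1}^+ + \|h\|_{k_2}^+)$, the proposition is equivalent to the bound
\[
\|f \circ h\|_{k_1+k_2}^+ \leq \|f\|_{k_1}^+ + \|h\|_{k_2}^+
\]
for every $k_1,k_2 \geq 0$.

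Fix $\varepsilon > 0$ and pick Hamiltonian fibrations $(P_i,\Omega_i) \to \Si_{k_i}$, $i=1,2$, with boundary monodromies $f$ and $h$ and normalized volumes within $\varepsilon/3$ of $\|f\|_{k_1}^+$ and $\|h\|_{k_2}^+$. Decompose $\Si_{k_1+k_2}$ topologically as $\Si_{k_1} \cup_\alpha \Pi \cup_\beta \Si_{k_2}$, where $\Pi$ is a pair of pants with boundary circles $\alpha,\beta,\gamma$, with $\alpha$ and $\beta$ glued to $\partial\Si_{k_1}$ and $\partial\Si_{k_2}$, and $\gamma = \partial\Si_{k_1+k_2}$; the Euler-characteristic count $(1-2k_1)+(-1)+(1-2k_2) = 1-2(k_1+k_2)$ confirms this gives a genus $k_1+k_2$ surface with one boundary component. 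Over $\Pi$, build a flat Hamiltonian fibration $(Q,\Omega_\Pi)$: since $\pi_1(\Pi)$ is free of rank two, choose the monodromy homomorphism $\pi_1(\Pi) \to \Ham(M,\om)$ sending the two free generators to $f$ and $h$, fix any area form $\sigma_0$ on $\Pi$ and any $\delta>0$, and let $\Omega_\Pi$ be the symplectic form whose restriction to each fibre is $\om$ and whose restriction to the horizontal distribution is $\delta\,\pi^\ast\sigma_0$. The normalized volume then equals $\delta \cdot \mathrm{area}(\Pi,\sigma_0)$, which we make $<\varepsilon/3$ by choosing $\delta$ small. The relation $c_\alpha c_\beta c_\gamma = 1$ in $\pi_1(\Pi)$ forces the monodromy around $\gamma$, oriented as the outward boundary of $\Si_{k_1+k_2}$, to be exactly $f \circ h$.

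Finally, glue $(P_1,\Omega_1)$, $(Q,\Omega_\Pi)$, $(P_2,\Omega_2)$ along $\alpha$ and $\beta$. Hamiltonian $M$-bundles over a circle are classified up to isomorphism by their monodromy in $\Ham(M,\om)$, so on both sides of $\alpha$ (common monodromy $f$) and on both sides of $\beta$ (common monodromy $h$) the bundles are isomorphic, and these identifications extend to collar neighbourhoods of the gluing circles. The resulting Hamiltonian fibration over $\Si_{k_1+k_2}$ has boundary monodromy $f \circ h$ and normalized volume bounded by $\|f\|_{k_1}^+ + \|h\|_{k_2}^+ + \varepsilon$. Letting $\varepsilon \to 0$ yields the coefficient-wise inequality and hence the proposition.

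\textbf{The main obstacle.} The delicate step is the symplectic gluing: although the bundle isomorphism along each collar is provided by the monodromy classification, matching the three closed 2-forms across these identifications so that the pieced-together form is globally symplectic requires interpolating $\Omega_i$ and $\Omega_\Pi$ by a Moser-type isotopy inside a thin bi-collar. One has to verify that the interpolating family remains non-degenerate when the collar is chosen thin enough relative to the fibrewise geometry, and that the extra volume it contributes can be absorbed into the $\varepsilon$ budget. This is standard symplectic cut-and-paste but is where the real technical work sits.
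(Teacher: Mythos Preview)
Your proof is correct and follows the same gluing philosophy the paper uses (in its proof of Theorem~\ref{thm:theorem2}, which is the only place the relevant construction appears; the Proposition itself is stated without a separate proof). The one difference is cosmetic: the paper glues the two near-optimal fibrations by a \emph{boundary connected sum} of $\Si_{k_1}$ and $\Si_{k_2}$ performed in a small neighbourhood of the base point $p_0$ on each boundary circle, whereas you insert an explicit pair of pants $\Pi$ carrying a flat Hamiltonian fibration of arbitrarily small area. Topologically these are the same operation---your $\Pi$ is precisely a thickening of the region created by the boundary connected sum---and both yield $\Si_{k_1+k_2}$ with boundary monodromy $f\circ h$.

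Your version has the advantage of making the monodromy bookkeeping completely transparent via $\pi_1(\Pi)$, at the cost of the collar-gluing issue you flag at the end. The paper's boundary-connected-sum phrasing sidesteps that issue: near $p_0$ each fibration is already symplectically trivialised as a product $(\text{arc}\times[0,1])\times(M,\omega)$, so the identification is an honest symplectomorphism on the overlap with no Moser interpolation needed, and no extra volume is created. If you want to tighten your write-up, you can replace the pair-of-pants step by this direct boundary connected sum and drop the ``main obstacle'' paragraph entirely.
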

           
             Let $f \in \Ham(M,\omega)$, and consider by abuse of notation $\| f \|_T^+ : \NN \to \RR$ the application assigning to each integer $n$ the coefficient $\|f\|_n^+$. Then $\| f \|_T^+$ is evidently a non-negative and non-increasing application.
             
             The following question is the principal conjecture concerning the behaviour of $g$-areas.
             
             \begin{conjecture} For all $f \in \Ham(M,\omega)$, the application $\| f \|_T^+ : \NN \to \RR$ is convex, that is to say: for all $g > 0$,
             $$ 
            \| f \|_{g+1}^+ - \| f \|_{g}^+  \le  \| f \|_g^+ - \| f \|_{g-1}^+
                $$
             \end{conjecture}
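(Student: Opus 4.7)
The plan is to prove the conjecture by a cut-and-paste argument on Hamiltonian fibrations, in the spirit of Theorems \ref{thm:theorem1} and \ref{thm:theorem2}. Fix $\eps>0$ and let $P_1,P_2 \to \Si_g$ be Hamiltonian fibrations with boundary monodromy $f$ and volumes within $\eps$ of $\|f\|_g^+$. The goal is to construct, from these, Hamiltonian fibrations $Q_- \to \Si_{g-1}$ and $Q_+ \to \Si_{g+1}$, each with boundary monodromy $f$, satisfying
$$
\vol(Q_-) + \vol(Q_+) \leq \vol(P_1) + \vol(P_2).
$$
Passing to infima and letting $\eps \to 0$ then yields the stated inequality $\|f\|_{g+1}^+ + \|f\|_{g-1}^+ \le 2\|f\|_g^+$.

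\textbf{Construction.} Choose a non-separating simple closed curve $\gamma \subset \Si_g$ encircling a handle, and cut each $P_i$ along its copy of $\gamma$. This produces fibrations $\tilde P_i$ over a genus-$(g-1)$ surface with three boundary components (the original $f$-boundary plus two ``cut circles''), together with a Hamiltonian monodromy $h_i \in \Ham(M,\om)$ around the cut circles. Form $Q_-$ by capping off the two cut circles of $\tilde P_1$ with minimal-volume Hamiltonian disc fibrations of boundary monodromy $h_1$; this yields a fibration over $\Si_{g-1}$ with boundary monodromy $f$. Form $Q_+$ by attaching to $\tilde P_2$ a Hamiltonian fibration over a genus-$1$ surface with $2$ boundaries whose two boundary monodromies match those of the cut circles of $\tilde P_2$; a short Euler-characteristic check shows that the result is a fibration over $\Si_{g+1}$, still with boundary monodromy $f$.

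\textbf{Main obstacle.} The heart of the argument---and the place where I expect the real difficulty to lie---is the volume accounting. Each capping disc in $Q_-$ has volume at least $\|h_1\|_0^+$, and the genus-$1$ attachment in $Q_+$ has volume bounded below by a commutator-type norm involving $h_2$; na\"ively, these excess volumes overshoot the target. The content of the conjecture is precisely that an optimal choice of $(P_1,P_2,\gamma)$ makes these excesses cancel the savings obtained from the genus modifications. A natural approach is variational: were the inequality to fail, a volume-minimizing pair $(P_1,P_2)$ subject to the constraints should admit, via the above surgery, a strictly volume-decreasing modification, contradicting minimality. Making this rigorous requires a structural result asserting that at such a minimizer one can find a non-separating curve $\gamma$ along which the Hamiltonian monodromy has vanishing Hofer norm---an Euler--Lagrange-type condition for the Hofer energy on the space of Hamiltonian fibrations with fixed boundary monodromy. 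I expect proving this monodromy-vanishing lemma to be the principal obstacle.
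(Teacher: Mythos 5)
First, a point of order: this statement is labelled a \emph{Conjecture} in the paper and the authors give no proof of it --- it is posed as the principal open question about the behaviour of the $g$-areas. So there is no argument in the paper to compare yours against, and your proposal should be judged on its own terms. On those terms it is not a proof but a strategy with a gap that you yourself flag, and the gap is genuine and, as far as I can see, essentially the whole difficulty. Concretely: cutting $P_1$ along a non-separating curve $\gamma$ drops the genus by one but creates two new boundary circles carrying monodromies $h_1$ and (a conjugate of) $h_1^{-1}$; to recover a fibration over $\Si_{g-1}$ with its single $f$-boundary you must cap both, and the cheapest cappings cost at least $\|h_1\|_0^+ + \|h_1^{-1}\|_0^+=\|h_1\|_0$ of extra volume. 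Nothing in the construction recoups this: the best you get is $\|f\|_{g-1}^+ \le \|f\|_g^+ + \|h_1\|_0 + \eps$ and $\|f\|_{g+1}^+ \le \|f\|_g^+ + \eps$ (the latter is just monotonicity, already known), which falls short of the claimed inequality unless $\|h_1\|_0=0$.

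The ``monodromy-vanishing lemma'' you propose to close this gap is unsupported on two counts. First, the quantities $\|f\|_g^+$ are infima over a non-compact family of fibrations, and there is no reason a volume-minimizing $(P_1,P_2)$ exists; without a minimizer the Euler--Lagrange heuristic has no object to apply to, and running the argument along a minimizing sequence would require uniform control of the cut monodromies $h_i$ that you have not indicated how to obtain. Second, even at a hypothetical minimizer there is no mechanism forcing the existence of a non-separating curve whose monodromy has vanishing $0$-area: the monodromy around $\gamma$ is dictated by the global structure of the fibration, and perturbing $\Om$ to decrease $\|h_1\|_0$ will in general increase the total volume, so ``vanishing at a critical point'' is not the expected first-order condition. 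Finally, note that even if both surgeries worked with no volume loss, you would only have produced \emph{some} competitors $Q_\pm$, giving upper bounds for $\|f\|_{g\pm1}^+$; that is the right shape of argument for this inequality, but the quantitative content is entirely concentrated in the unproven lemma, so the proposal does not establish the conjecture.
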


        Before giving the proofs of our theorems below, let us examine the infinitesimal version of these results.
    
       We have seen that, since the group of Hamiltonian diffeomorphisms of a closed symplectic manifold $(M,\omega)$ is simple, any Hamiltonian diffeomorphism  of $M$ can be written as a product of a finite number of commutators. Moreover we show in the above theorem that the $g_+$-area of a Hamiltonian diffeomorphism $f$ can be regarded as the obstruction to writing $f$ as a product of $g$ commutators. It is natural to ask whether there exist {\it infinitesimal versions} of these results and constructions.  First of all note that, by a well known theorem due to Lichnerowicz (and also to Calabi and  Rosenfeld) (see Theorem 1.4.3 in \cite{Ba}) the Lie algebra of Hamiltonian vector fields is perfect, so  any Hamiltonian vector field can be written as a linear combination of Lie brackets of Hamiltonian vector fields. Therefore it is natural to ask
\\ \\       
{\bf Question:} {\it Can one define for every $k\in\N$ an invariant for  Hamiltonian vector fields which can be interpreted as  the  obstruction to writing the given vector field as a linear combination of $k$ Lie brackets of Hamiltonian vector fields?}
\\

It is convenient to replace Hamiltonian vector fields by smooth functions  (modulo constants) using the correspondence $f\mapsto X_f$, and the Lie bracket by the  Poisson bracket. The two questions above have obvious reformulations in terms of Hamiltonians.

We believe that the first step in studying these questions is 
to describe explicitly the cone of {\it infinitesimal commutators} $\{\{f,g\}|\ f,\ g\in {\Cal C}^\infty(M,\R)\}$ in ${\Cal C}^\infty(M,\R)$, but we realized that this question is already quite difficult. In Sect. 3 we will make progress in this direction which, although modest, shows that the problem is interesting and difficult:  we will fix a Morse function $f$ (satisfying a weak genericity condition)  on a closed symplectic surface $(M,\omega)$ and we will describe explicitly the space of functions $u\in {\Cal C}^\infty(M,\R)$ which can be written under the form $u=\{f,g\}$ (or, equivalently, $u=X_f(g)$, where $X_f$ is the Hamiltonian vector field of $f$) with $g\in {\Cal C}^\infty(M,\R)$.

The referee kindly informed us that  our problem is often called in the literature "the cohomological equation for flows"  and there exists an ample literature dedicated to this  problem for an interesting  class of  Hamiltonian vector fields on higher dimensional manifolds, for instance for Hamiltonian  fields defining  Anosov flows. The so called Livsic theory (see \cite{Li}) deals with this problem. A special example of such a  flow is   the geodesic flow  on a  hyperbolic surface, which has been investigated by many authors  (see \cite{CEG}, \cite{FF}, \cite{LL}, \cite{LLMM}).   Further important results dedicated to  Livsic's cohomological equation concern the  ''locally
Hamiltonian flows'', or area-preserving flows  (on surfaces of higher genus) with canonical saddle-like singularities and their return maps
(interval exchange transformations)  \cite{Fo}, \cite{MMY}.

\medskip
\noindent
{\bf Acknowledgements.} We are very grateful to Michael Entov for pointing out the relation between his notion of size and the invariants introduced in this paper.  We are also grateful to the referee for his pertinent comments and interesting bibliographic references.

             \section{Proof of Theorem~\ref{thm:theorem2}}  
             
             We begin by constructing, for $f = [\phi, \psi] \in \Ham(M,\omega)$ and $\eps >0$ given, a ruled symplectic fibration $(M, \om) \hookrightarrow (P_f, \Om) \to \Si_1$ with fiber $(M, \om)$ and base the punctured torus, that satisfies:
             
             1) the monodromy of $P_f$ round the boundary of $\Sigma_1$ is equal to $f$, and
             
             2) $A(P_f, \Om) \leq \eps$
             
             where $A(P_f, \Om)$ is the area of $P_f$ defined as the quotient of the volume of $ (P_f, \Om)$ by the volume of $(M, \om)$.
             
             To achieve this, let us take two copies $A_1, A_2$ of the annulus $\RR/3\ZZ \times [0,1].$ Let us glue $A_1$ to $A_2$ by the rotation
           $$
           R: [0,1] \times [0,1] ( \subset A_1)  \to   [0,1] \times [0,1] ( \subset A_2)
           $$
           of angle equal to $\pi/2$. The space $A = A_1 \cup_R A_2$ is a thickening of the $1$-squeletton, homeomorphic to the punctured torus $\Si_1$. Consider now the direct product $(A, \sigma) \times (M, \om)$ where $\sigma$ is the obvious induced area form from  the two copies of $\RR/3\ZZ \times [0,1]$. Cut $A \times M$ over the segment $B_1 = \{2\} \times [0,1] \subset A_1$ and glue $B_1^- = \{2^-\} \times [0,1] \times M$ to $B_1^+ = \{2^+\} \times [0,1] \times M$ using $\phi$. Similarly, cut $A \times M$ over the segment $B_2 = \{2\} \times [0,1] \subset A_2$ and glue $B_2^- = \{2^-\} \times [0,1] \times M$ to $B_2^+ = \{2^+\} \times [0,1] \times M$  using $\psi$.
           
\begin{figure}
\centering
\scalebox{0.6}
{\includegraphics{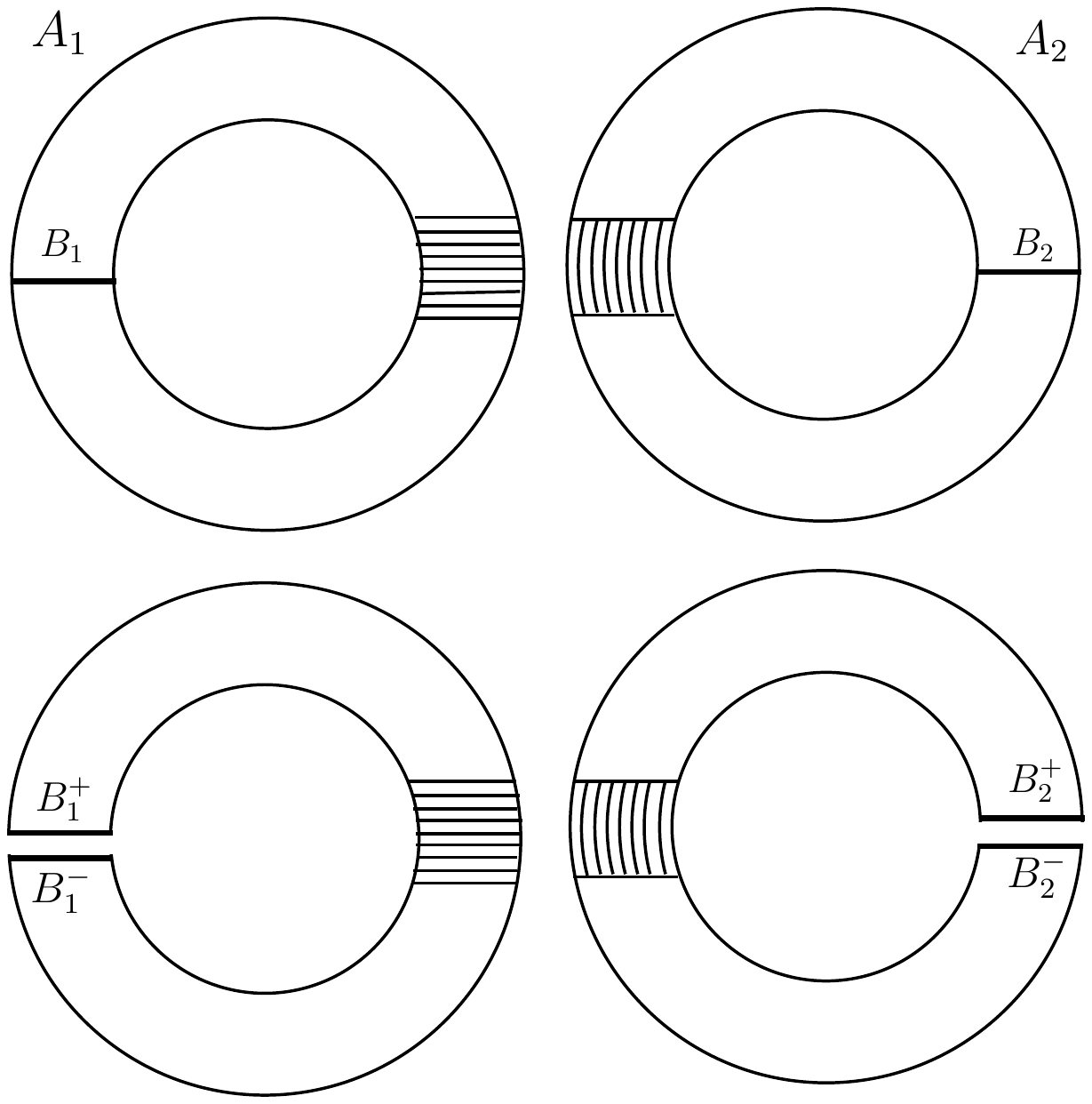}}
\label{base-new}
\end{figure}

           It is clear that the monodromy of the resulting fibration $(M, \om) \hookrightarrow (P_f, \Om) \to A \simeq \Si_1$ over the boundary  $\partial A$ is $\psi^{-1} \circ \phi^{-1} \circ \psi \circ \phi = [\phi, \psi]$. On the other hand, the area $A(P, \Om)$ can be chosen as small as one wishes.
           
           Now, let a product of $k$ commutators $f = [\phi_1, \psi_1] \ldots [\phi_k, \psi_k]$ be given. Let $p_0$ be the base point on the boundary of the base and for each $1 \leq i \leq k$ let $P_{f_i =   [\phi_i, \psi_i]} \to \Si_{1,i}$ be the symplectic fibration constructed above with arbitrary small area and monodromy equal to $f_i =   [\phi_i, \psi_i]$. We glue together the $k$ fibrations by first glueing the base near the base point and then by identifying the corresponding fibers over a small neighbourhood of $p_0$ where the fibrations are trivial. This construction shows that each handle can be used to produce, and therefore to kill,  a commutator.
           
           To prove Theorem\ref{thm:theorem2}, we must find for each $f \in \Ham(M,\omega), h \in \Cc_k$ and $\eps > 0$, a fibration $(M, \om) \hookrightarrow (P, \Om) \to \Si_k$ whose monodromy is $f$ and whose area is bounded above by $\|f \circ h^{-1}\|_0^+ + \eps$. Let $(M, \om) \hookrightarrow (P', \Om) \to \Si_0$ be a fibration whose area is bounded above by  $\|f \circ h^{-1}\|_0^+ + \eps/2$ and whose monodromy is equal to $f \circ h^{-1}$. This exists by the definition of the $0$-distance between $f$ and $h$. Since $h$ belongs to $\Cc_k$, there is also a fibration $(M, \om) \hookrightarrow (P'', \Om) \to \Si_k$ of area bounded above by $\eps/2$ and monodromy $h$. The connected sum of both fibrations $P'$ and $P''$ near the base point $p_0$ of the boundary gives a fibration $P$ over $\Si_k$ of area bounded above by $\|f \circ h^{-1}\|_0^+ + \eps$ with monodromy $f \circ h^{-1} \circ h = f$. This completes the proofs of Theorems~\ref{thm:theorem1} and \ref{thm:theorem2}.
           
\subsection{The relation with Entov's article} \label{ss:entov}

Let us fix an area form $\sigma$ of total area 1 on the surface $\Sigma_g$. 

Since a surface with boundary is homotopy equivalent to its 1-skeleton and the structure group $\Ham (M,\omega)$ is connected, all our Hamiltonian fibrations are topologically trivial. This means that the (relative, modulo the boundaries) cohomology classes of the forms are related as follows:

$$
[\Omega] = [\pi_1^* \omega] + \tau [\pi_2^* \sigma]
$$
for some real positive $\tau$, where $\pi_1$ and $\pi_2$ are the projections of $P$ to $M$ and $\Sigma_g$ respectively under a trivialization of the fibration. 
Thus 

$$
vol(P,\Omega) = \int_P [\Omega]^{n+1}= \tau \int_P [\omega]^n [\sigma],    
$$
$$
vol(M,\omega) = \int_M [\omega]^n
$$
and the ratio $vol(P,\Omega)/vol(M,\omega)$ is $\tau$.

In other words, our $g_+$ area is  the infimum of all positive $\tau$ for which one can find a ruled symplectic form $\Omega$ on P representing the class $[\omega]+\tau [\sigma]$ and having the prescribed holonomy over the boundary.

Now $1/g_+$ is more or less the notion of $size_g$ appearing in Entov's paper \cite{E} in Section 5 (its analog for Hamiltonian fibrations over $S^2$ had appeared before in Polterovich's papers cited there). One of the main differences with our setup is that we work with the group $\Ham$ while in Entov's paper, $size_g$ is defined for elements of the universal cover of $\Ham$. 

Then Proposition 5.0.9 in Entov's paper (with $\ell=1$), translated to our setup and proved by the same methods as in Polterovich's previous papers for the case of fibrations over $S^2$, would say that: \\
\\
$g_+ - area (f) = 1/size_g (f) \leq $ {\it  the Hofer distance from $f $ to the set of elements of $\Ham(M,\omega)$ which are products of at most $g$ commutators.}\\

This is weaker than our Theorem 1.2 : indeed in our Theorem, we distinguish between the positive and negative Hofer norm, yielding a finer result. In fact,  our proof is completely different from Entov-Polterovich.

\section{The space of infinitesimal commutators on a closed symplectic surface}

Let $(M,\omega)$ be a closed symplectic manifold. Our goal is to determine explicitly the space
of infinitesimal commutators
$${\cal C}_\omega:=\big\{\{f,g\}\ \vline \ f,\ g\in {\cal C}^\infty(M,\R)\big\}\ .
$$
Recall that $\{f,g\}:=\iota_{X_f}dg$,  where $X_f$   is the Hamiltonian vector field     associated with $f$. The first step is to compute, for a fixed smooth function $f\in {\cal C}^\infty(M,\R)$ the space
$${\cal C}_{\omega,f}:=\big\{\{f,g\}\ \vline  \ g\in {\cal C}^\infty(M,\R)\big\}\ .
$$
A pair $s=(\sigma,\xi)$ consisting of a connected 1-dimensional manifold and a nowhere vanishing vector field $\xi\in{\cal X}(\sigma)$ of $\sigma$ will be called {\it framed 1-manifold}. A framed 1-manifold $s=(\sigma,\xi)$ will be called {\it framed circle} if $\sigma$ is a circle; in this case $\xi$ is induced by a periodic map $\lambda:\R\to \sigma$, which is well defined up to reparametrisation given by a translation, so the period $\tau(s)\in(0,\infty)$ of $\lambda$ is well defined (it depends only on the pair $s$).    Such a parametrisation $\lambda$ will be called compatible. For a compact framed 1-manifold with non-empty boundary, a compatible parametrization will be a diffeomorphism $\lambda:[t,  t+\tau(s)]\to\sigma$ inducing $\xi$.

Let $s=(\sigma,\xi)$ be a compact (closed or compact with boundary) framed 1-manifold, and denote by $\mu_\xi$   the 1-form on $\sigma$ defined by $\langle \mu_\xi,\xi\rangle\equiv 1$.  For a  continuous function  $f$  on $\sigma$  we put 
$$\int _s f:=\int_\sigma f \mu_\xi=\int_t^{t+\tau(s)} (f\circ\lambda)\lambda^*(\mu_\xi)=\int_t^{t+\tau(s)} f(\lambda(t)) d t\ ,
$$
where  $\lambda$ is a compatible parametrisation of $s$.

Note that any  integral circle (non-constant periodic orbit) of a vector field $X$ on a manifold can be regarded naturally as a (closed) framed circle.

\begin{re} Let  $(M,\omega)$ be a symplectic manifold, $f$, $g\in{\cal C}^\infty(M,\R)$ smooth functions on $M$. Then
\begin{enumerate}
\item $\{f,g\}(p)=0$ for any critical point $p$ of $f$.
\item $\int_s \{f,g\}=0$
for every  integral circle of  the Hamiltonian vector field $X_f$.
\end{enumerate}
\end{re}
Therefore
\begin{equation}\label{incl}{\cal C}_{\omega,f}\subset \left\{u\in{\cal C}^\infty(X,\R)\vline\begin{array}{c} 
\resto{u}{{\rm Crit}(f)}\equiv 0\\  \int_s u=0\ \forall s \hbox{  integral circle   of }X_f\end{array}\right\}.
\end{equation}

\begin{thry} \label{Th} The inclusion (\ref{incl}) is an equality when $M$ is a closed surface and $f$ is a Morse function with the property that any connected component of a level set of $f$ contains at most one index 1 - critical point.
\end{thry}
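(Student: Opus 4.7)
The plan is to prove the non-trivial inclusion in (\ref{incl}): given $u\in\mathcal{C}^\infty(M,\R)$ vanishing on $\mathrm{Crit}(f)$ and satisfying $\int_s u=0$ on every integral circle $s$ of $X_f$, we want to construct a smooth $g$ on $M$ with $X_f(g)=u$. This is the classical cohomological equation for the flow of $X_f$, so the natural strategy is to define $g$ by integrating $u$ along the orbits of $X_f$ (which is consistent with the periodicity of each orbit precisely because $\int_s u=0$) and then to verify that the integration constants can be chosen so that the resulting function is globally smooth.

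I would organise the construction around the Reeb graph of $f$. Each open edge $e$ corresponds to a cylinder $C_e\subset M$ foliated by the regular periodic orbits of $X_f$, of smoothly varying period $T_e(f)$. In flow-box coordinates $(f,t)$ on $C_e$ with $t\in\R/T_e(f)\Z$ one has $X_f=\partial_t$, so a local solution is
$$
g_e(f,t)=\int_0^t u(f,\tau)\,d\tau+G_e(f),
$$
well-defined on the orbits by hypothesis and unique up to the choice of a smooth function $G_e$ on the edge. The problem reduces to choosing the family $\{G_e\}$ so that the local pieces agree smoothly across the singular levels, i.e.\ at the vertices of the Reeb graph. At an extremal vertex, working in a Darboux--Morse chart in which $f$ is in Morse normal form and $\omega=dx\wedge dy$, $X_f$ is a rotation of constant period, and a direct Taylor expansion (using $u(\mathrm{extremum})=0$) shows that $g_e$ extends smoothly to the critical point, determined up to the additive constant $g(\mathrm{extremum})$.

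The main obstacle is the saddle case. In a Darboux--Morse chart with $f=xy$, $\omega=dx\wedge dy$, one has $X_f=x\partial_x-y\partial_y$ and the explicit hyperbolic flow $(x_0,y_0)\mapsto(x_0e^t,y_0e^{-t})$. The period $T_e(f)$ of nearby orbits diverges like $|\log|f||$ and each orbit spends time of that order inside the chart, so the partial integral $\int_0^t u(f,\tau)\,d\tau$ can a priori exhibit logarithmic growth as one approaches the figure-eight singular level; smoothness of $g$ across this level requires a precise cancellation of these logarithmic terms. The hypothesis that each level component contains at most one index-$1$ critical point is exactly what ensures that the singular level component at a saddle has the standard figure-eight shape and that the Reeb graph locally has a well-understood incidence (of at most four edges) at the saddle vertex. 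The strategy is then to Taylor-expand $u$ at the saddle, substitute into the explicit hyperbolic flow to compute the local contributions to the partial integral, and use the vanishing of $u$ at the saddle together with the limiting form of the identity $\int_s u=0$ as $s$ tends to the figure-eight along each of its two loops, in order to kill the logarithmic divergences and to pin down the $G_e$ on the incident edges, up to the single overall constant $g(\mathrm{saddle})$.

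Once the free constants have been fixed consistently at every vertex of the (connected) Reeb graph, the resulting function $g$ is smooth on $M$ and satisfies $\{f,g\}=X_f(g)=u$ by construction, giving the equality in (\ref{incl}). The hard technical core is the saddle smoothness analysis outlined above, where the genericity hypothesis on $f$ plays its essential role.
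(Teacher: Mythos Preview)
Your outline follows the same strategy as the paper's---solve $X_f g=u$ orbit-by-orbit on the regular cylinders and then analyse what happens near the critical points---but the paper organises the argument differently and is sharper at exactly the point you flag as the ``hard technical core.'' The paper separates the problem into (i) \emph{local solvability} around each critical point and (ii) a \emph{global patching} step phrased as the vanishing of an obstruction in $H^1(M,\mathcal K)$, where $\mathcal K$ is the sheaf of germs annihilated by $X_f$; the Leray spectral sequence for the projection onto the Reeb graph $\mathcal C$ (Lemma~\ref{coho}) identifies this obstruction with a fibrewise-linear function on $\bigcup_{c\in\mathcal C}H_1(c)$, which is then killed by the hypothesis $\int_s u=0$ together with a limit argument on the singular fibres. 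Your ``fix the $G_e$ along the connected Reeb graph'' is the hands-on version of (ii), but connectedness alone does not explain why no consistency condition arises around the cycles of the Reeb graph; in the paper's language this is the fineness of $R^0\pi_*\mathcal K$. (Incidentally, under the hypothesis on $f$ a saddle vertex has degree three, not ``at most four.'')

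The more serious gap is in the saddle analysis. In a chart with $f=xy$ the obstruction to local $C^\infty$ solvability of $X_f g=u$ is not a single logarithmic term but the full tower $\bigl(\partial_x\partial_y\bigr)^n u(0)=0$ for all $n\geq 0$; by Roussarie's theory of hyperbolic germs, formal solvability is equivalent to $C^\infty$ solvability, and the paper packages this as Theorem~\ref{many}: the jet conditions hold if and only if any one of the four functions $\rho\mapsto\phi_u^{\pm\pm}(\rho)$ (the integral of $u$ along the arc of the hyperbola $xy=\rho$ lying inside the chart) extends smoothly across $\rho=0$. The decisive geometric observation---which your sketch gestures at but does not isolate---is that the complementary arc $\mu_t$ of the integral circle, the part lying \emph{away} from the saddle, varies smoothly with the level $t$, so $\int_{\mu_t}u$ is smooth in $t$; since $\int_{\gamma_t}u=\int_{\mu_t}u+\int_{\nu_t}u=0$ by hypothesis, the in-chart integral $\int_{\nu_t}u$ inherits a smooth extension across $t=0$, and Theorem~\ref{many} then yields local solvability. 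That is how the \emph{global} periodic-orbit hypothesis forces the \emph{local} infinite-jet condition at the saddle; the single condition $u(p_0)=0$ you invoke is only the case $n=0$ and does not by itself control the higher-order behaviour you need.
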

\begin{proof}

Let $u\in{\cal C}^\infty(M,\R)$  such that 
\begin{equation}\label{cycle}\int_s u=0\ \forall s \hbox{   integral circle   of }X_f\ .
\end{equation}
  We will show that the differential equation $\{f,g\}=u$  has a smooth solution $g\in{\cal C}^\infty(M,\R)$.
\vspace{2mm}\\
{\bf Step 1}: Local solvability.
\\

The interesting part of the proof is to show that condition (\ref{cycle}) which has a global character (it concerns the behavior of $u$ on certain curves contained in $M$) implies the local conditions which are necessary  to assure the local solvability of our equation around the critical points. Whereas this is not surprising for critical points of index 0 and 2 (because  there exists integral circles which are arbitrary  close to such a point), for critical points of index 1 passing from  (\ref{cycle}) to the needed local solvability conditions is not obvious at all.  The argument is based on  the solvability Theorem \ref{many} stated at the end of the section and a geometric remark.\\

Suppose that $p_0$ is a critical point of $f$, and let $p_0\in U\textmap{h} V\subset\R^2$ be a Morse chart around $p_0$ for $f$, i.e. a chart  such that $h(p_0)=0$, $\resto{f}{U}(x,y)=\frac{1}{2}(\pm x^2\pm y^2)$, and write $\resto{\omega}{U}=  a dx\wedge dy$. We may suppose that our chart is compatible with the symplectic orientation, so $a$ is  positive on $U$. Therefore
$$X_f= \frac{1}{a}\left[\frac{\partial f}{\partial y}\frac{\partial }{\partial x}-\frac{\partial f}{\partial x}\frac{\partial}{\partial y}\right]\ ,\ \iota_{X_f} dg=\frac{1}{a}\left[\frac{\partial f}{\partial y}\frac{\partial g }{\partial x}-\frac{\partial f}{\partial x}\frac{\partial g}{\partial y} \right]\ .
$$
{\ }\\
{\bf A}. Suppose  $p_0$ has index 0,  i.e. it is a local minimum and $f(x,y)=\frac{1}{2}(x^2+y^2)$, $\resto{\omega}{U}= a dx\wedge dy$. In this case $\{f,g\}=\frac{1}{a}(y\frac{\partial g }{\partial x}-x\frac{\partial g}{\partial y})$ and the condition $\{f,g\}=u$
becomes $P_Xg =-au$, where $X=-y\frac{\partial}{\partial x}+x\frac{\partial}{\partial y}$ is the vector field whose associated first order differential equation is dealt with   in Theorem  \ref{newtheorem} below.  The same arguments applies around index 2 -  critical points. 
\\ \\
{\bf B}.  Suppose now that $p_0$ has index 1. Using a linear change of coordinates we may suppose that $\resto{f}{U}(x,y)=xy$, so $X_f=\frac{1}{a}(x\frac{\partial}{\partial x}-y\frac{\partial}{\partial y})$, which, up to the factor $\frac{1}{a}$ is just the vector field whose associated differential equation is studied in Theorem \ref{many} below.

We will prove that the equation $P_{X_f} g=u$ is locally solvable around $p_0$. Let $Z$ be a vector field on $M\setminus\mathrm{Crit}(f)$ such that $df(Z)\equiv 1$. Let $(\Psi_t)_{t\in\R}$ be the corresponding 1-parameter group, every $\Psi_t$ being considered as usual on its maximal domain.

Let $c_0$ be the connected component of the level set $f^{-1}(f(p_0))$ 
containing $p_0$. Taking into account our assumption about $f$, $p_0$ is the only critical point belonging to $c_0$. Let $\gamma_0\subset c_0$ be a singular circle containing $p_0$, i.e. the closure of a non-compact integral line of $X_f$ converging in both directions to $p_0$. Let  $p_0'$, $p_0''\in \gamma_0$ be two points close to $p_0$ belonging to the two branches which intersect transversally in $p_0$. These points cut $\gamma_0$ in two segments $\mu_0$, $\nu_0$, where the notations were chosen such that $p_0\in\nu_0$.

\begin{figure}
\centering
\scalebox{0.8}
{\includegraphics{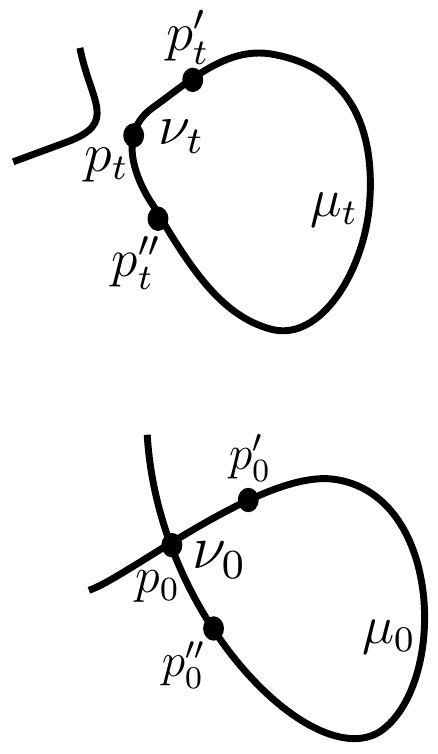}}
\label{critical}
\end{figure}

Consider the image $\Psi_t(\gamma_0\setminus\{p_0\}$) for $t\in(-\epsilon,\epsilon)$. The point is that, either for positive or for negative $t$, the image $\Psi_t(\gamma_0\setminus\{p_0\})$ can be written as $\gamma_t\setminus\{p_t\}$, where $\gamma_t$ is a smooth circle (containing no critical points) converging to $\gamma_0$,  and $(p_t)$ is a smooth path converging to $p_0$   as $t\to 0$. Suppose that this occurs for positive $t$. 
We   put $p'_t:=\Psi_t(p'_0)$, $p''_t:=\Psi_t(p_0'')$, $\mu_t:=\Psi_t(\mu_0)$ for $t\in(-\epsilon,\epsilon)$, and $\nu_t:=\overline{\gamma_t\setminus\mu_t} $ for $t\in(0,\epsilon)$. All the segments $\mu_t$, $\nu_t$ are naturally framed by the field $X_f$.

The argument is now very simple: since $\gamma_t$ is an integral circle of $X_f$  our hypothesis implies $0=\int_{\gamma_t} u=\int_{\mu_t} u+\int_{\nu_t} u$ for any $t\in(0,\epsilon)$, so  for positive $t$ one has $\int_{\nu_t} u=-\int_{\mu_t} u$, which extends smoothly for   $t\leq 0$, because the segment $\mu_t$ does. This shows that one of the functions $\phi_u^{++}$,  $\phi_u^{--}$, $\phi_u^{+-}$, $\phi_u^{-+}$ appearing in Theorem \ref{many} below extends smoothly at 0, which, by this theorem, implies that the equation $P_{X_f}g=u$ is locally solvable around $p_0$.

\vspace{1mm} {\ }\\
{\bf Step 2}: Global solvability.  Let ${\cal K}$ be the sheaf of germs of (locally defined) smooth functions  $\kappa$ satisfying the equation $P_{X_f} \kappa=0$ associated with the vector field $X_f$.  According to the first step, there exists an open cover $(U_i)_{i\in I}$ of $M$ and local solutions $g_i:U_i\to \R$ of the equation $P_{X_f} g=u$. The system $\kappa_{ij}=g_j- g_i\in{\cal K}(U_i\cap U_j)$ defines a 1-cohomology class $\oo(u)\in H^1(M,{\cal K})$, which is the obstruction to the construction of a global solution.

According to Lemma \ref{coho} below, the space $H^1(M,{\cal K})$ can be embedded in the space of smooth functions ${\cal H}_1\to\R$ which are fibrewise linear, where ${\cal H}_1$ is the union of the 1-homology groups of the connected components of the level sets of $f$, endowed with a natural topology and differentiable structure (see the construction below). For an integral circle $s$ of $X_f$ it is easy to prove that $\oo([s])=\int_s u$. Therefore our assumption implies that $\oo$ vanishes on the 1-homology of every smooth 1-dimensional component of a level set. If $c_0$ is a singular 1-dimensional component, it will contain only one index 1 - critical point, and $H_1(c_0)\simeq\Z^2$ is generated by the fundamental classes of two singular circles. But these circles are limits (in ${\cal H}_1$) of smooth integral circles, so our assumption implies $\oo=0$.

\end{proof}{\ }\vspace{-10mm}\\
{\it The cohomology of ${\cal K}$.}
\vspace{-2mm}\\ 

Let ${\cal K}$ be the sheaf of germs of (locally defined) smooth functions  $\kappa$ satisfying the equation $P_{X_f}\kappa=0$ on $M$  and 
 let ${\cal C}$ be the space of connected components of level sets of $f$, endowed with the quotient topology. This space comes with two obvious continuous surjections 
$$M\textmap{\pi}{\cal C}\textmap{\fg} [a,b]:=f(M)\ .$$
 ${\cal C}$ has  naturally the structure of a connected 1-dimensional CW complex, whose 0-cells correspond to the critical points of $f$ and whose 1-cells are mapped homeomorphically on sub-intervals of $[a,b]$ connecting two critical values. We  will compute the cohomology of the sheaf ${\cal K}$ on $M$ using the Leray spectral sequence associated with the projection $\pi:M\to {\cal C}$ (see \cite{G} Theorem 4.17.1, p. 201).  
\\

Every point $t\in[a,b]$ has a neighborhood $J_t\subset[a,b]$ such that
the inclusion $f^{-1}(t)\hookrightarrow f^{-1}(J_t)$ is a homotopy equivalence. 

The disjoint union
 ${\cal H}_1:= \cup_{c\in{\cal C}} H_1(c)
$ 
comes with a natural surjection  $\chi: {\cal H}_1\to  {\cal C}$ and has a natural structure of 1-manifold with boundary such that $\fg\circ\chi:{\cal H}_1\to[a,b]$ becomes an immersion. A section $[a,b]\supset J\ni t\mapsto h(t)$ is smooth with respect to this structure if for every $t\in J$ and $t'\in J_t\cap J$ the classes $h(t')$, $h(t)$ coincide in $H_1(f^{-1}(J_t))$.

Let $U\subset {\cal C}$ open. A continuous function $\beta:U\to\R$ will be called {\it smooth} if for every $c\in U$ and section $\gamma:V\to U\subset{\cal C}$ of $\fg$ defined on an open neighborhood $V$ of $\fg(c)$ in $[a,b]$, the composition $\beta\circ\gamma$ is smooth  on $V$.

\begin{lm} \label{coho} With the assumptions and the notations above:  
\begin{enumerate}
\item For an open set $U\subset {\cal C}$  the space $R^0(\pi_*)({\cal K})(U)$  can be embedded in the space of smooth functions on $U$. This sheaf if fine.
\item  For an open set $U\subset {\cal C}$  the space $R^1(\pi_*)({\cal K})(U)$ can be embedded in the space of smooth functions on $\chi^{-1}(U)\subset{\cal H}_1$ which are fibrewise group-morphisms.
\item The sheaves $R^i(\pi_*)({\cal K})(U)$ vanish for $i>1$.
\item The cohomology group $H^1(M,{\cal K})$ can be embedded in the space of smooth functions ${\cal H}_1\to\R$ which are fibrewise linear.
\end{enumerate}
\end{lm}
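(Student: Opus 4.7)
The plan is to combine a local analysis of the sheaf ${\cal K}$ on saturated neighbourhoods $\pi^{-1}(U)$ with the Leray spectral sequence for $\pi\colon M\to{\cal C}$, exploiting the fact that a section of ${\cal K}$ over a saturated open set is locally constant along flow lines of $X_f$. On an open saturated set lying over the interior of an edge of ${\cal C}$, every integral curve of $X_f$ is a closed level circle, so any $\kappa\in{\cal K}(\pi^{-1}(U))$ is forced to be a smooth function of $f$ alone and descends to a smooth $\bar\kappa:U\to\R$; over a vertex of ${\cal C}$ the singular figure-eight fibre is a union of integral arcs limiting to the critical point, and continuity of $\kappa$ again forces a single value on the whole fibre. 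Smoothness of $\bar\kappa$ in the sense defined before the lemma follows by lifting any local section $\gamma$ of $\fg$ to a smooth path in $M$ (using the submersion property of $f$ off $\mathrm{Crit}(f)$ and a suitable choice through the critical point at a vertex) and composing with $\kappa$. The vanishing $R^q\pi_*{\cal K}=0$ for $q>1$ in part (3) is a consequence of the one-dimensionality of ${\cal C}$: taking a saturated \v{C}ech cover of $M$ pulled back from an open cover of ${\cal C}$ whose nerve has dimension at most one produces a \v{C}ech complex of length at most one.

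\textbf{Fineness of $R^0\pi_*{\cal K}$.} Since $f$ is constant on the fibres of $\pi$, multiplication by any $\phi\circ f$ with $\phi\in{\cal C}^\infty([a,b])$ preserves ${\cal K}$. Pulling back partitions of unity from $[a,b]$ via $\fg$ and patching with bump functions supported in the interiors of the edges yields partitions of unity on ${\cal C}$ subordinate to any prescribed open cover, so the sheaf $R^0\pi_*{\cal K}$ is fine.

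\textbf{Identification of $R^1\pi_*{\cal K}$ (part (2)).} The stalk at $c\in{\cal C}$ is $\lim_{U\ni c}H^1(\pi^{-1}(U),{\cal K})$; for $U$ small, $\pi^{-1}(U)$ deformation retracts onto the fibre $F_c=\pi^{-1}(c)$, and the local description of ${\cal K}$ above identifies this cohomology with $H^1(F_c;\R)$ tensored with smooth germs on ${\cal C}$ at $c$. Pairing a \v{C}ech cocycle $\{\kappa_{ij}=g_j-g_i\}$ representing a class with a 1-cycle $s\subset F_c$ that is an integral circle of $X_f$ recovers precisely $\int_s u$, exactly as in the proof of Theorem~\ref{Th}; this produces a $\Z$-linear homomorphism $H_1(F_c;\Z)\to\R$ depending smoothly on $c$. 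Hence we obtain the embedding into smooth functions on $\chi^{-1}(U)\subset{\cal H}_1$ that are fibrewise group morphisms, smoothness in the base variable being read off from the smoothness of $t\mapsto\int_{s_t}u$ along smooth families $s_t$ of integral circles parametrised by local sections of $\fg$.

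\textbf{Part (4) and main obstacle.} Running the Leray spectral sequence $E_2^{p,q}=H^p({\cal C},R^q\pi_*{\cal K})\Rightarrow H^{p+q}(M,{\cal K})$, only the rows $q=0,1$ contribute by (3); the fineness of $R^0\pi_*{\cal K}$ together with $\dim{\cal C}=1$ then leaves the edge map $H^1(M,{\cal K})\hookrightarrow H^0({\cal C},R^1\pi_*{\cal K})$ as the only contribution to $H^1(M,{\cal K})$. Composing with the embedding of (2) and $\R$-linearly extending the $\Z$-linear fibrewise morphisms on ${\cal H}_1$ yields (4). The main obstacle I anticipate is the careful local analysis at the vertices of ${\cal C}$: the fibre $\pi^{-1}(c)$ at a singular level component is not a manifold, ${\cal K}$ is not literally $\pi^{-1}{\cal C}^\infty_{{\cal C}}$ there, and both the descent in (1) and the smoothness assertion in (2) near such a vertex must be verified in Morse coordinates using the same kind of extension-across-critical-points arguments that already appeared in Step 1 of the proof of Theorem~\ref{Th}.
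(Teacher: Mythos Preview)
Your overall strategy coincides with the paper's: analyse $R^i\pi_*{\cal K}$ stalkwise on saturated neighbourhoods of the fibres, then feed the result into the Leray spectral sequence for $\pi:M\to{\cal C}$ to deduce~(4). Your discussion of~(1), of the fineness of $R^0\pi_*{\cal K}$, and of the edge map in~(4) is correct and considerably more explicit than the paper, which declares the proof ``straightforward'' and only singles out the stalk at a singular fibre as requiring care.

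There is, however, a genuine gap in your argument for~(3). You attribute the vanishing of $R^q\pi_*{\cal K}$ for $q>1$ to the one-dimensionality of the \emph{base} ${\cal C}$, via a saturated \v{C}ech cover pulled back from a cover of ${\cal C}$ with $1$-dimensional nerve. But the saturated tubes $\pi^{-1}(V_\alpha)$ are not ${\cal K}$-acyclic: for a regular tube $S^1\times I$ one has $H^1(S^1\times I,{\cal K})\cong C^\infty(I)$, and this is precisely what produces the nontrivial $R^1\pi_*{\cal K}$ in~(2). Hence your cover is not Leray, its \v{C}ech cohomology does not compute $H^*(\pi^{-1}(U),{\cal K})$, and no bound on $R^q\pi_*{\cal K}$ follows from the nerve dimension. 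The correct reason for~(3) is the one-dimensionality of the \emph{fibres}, not of ${\cal C}$. The paper's route is to identify the stalk $(R^i\pi_*{\cal K})_{c_0}$ with $H^i(c_0,{\cal K}^0)$, where ${\cal K}^0:=\resto{\cal K}{c_0}$ is a \emph{cellular} sheaf on the $1$-dimensional CW complex $c_0$ (one $0$-cell $\{p_0\}$ and two $1$-cells when $c_0$ is the figure-eight through an index-$1$ critical point), and to compute with the cellular cochain complex of~\cite{T}. This simultaneously yields~(2) and~(3), and in particular resolves the ``main obstacle'' at the singular fibre that you correctly flag but do not actually carry out.
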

 
\begin{proof}  The proof is straightforward, so the details will be omitted. The only difficulty is to describe explicitly the stalk $R^i(\pi_*)({\cal K})_{c_0}$ at a point $c_0\in{\Cal C}$, which contains an index 1 - critical point $p_0$.  Using the general theory of Leray spectral sequences (see   \cite{G}   p. 201) we see that $R^i(\pi_*)({\cal K})_{c_0}=H^i(c_0, {\cal K}^0)$,
where ${\cal K}^0:=\resto{\cal K}{c_0}$ is the restriction of the sheaf ${\cal K}$ to $c_0$, i.e. the sheaf whose stalk at a point $p\in c_0$ coincides with the stalk  ${\cal K}_{p}$.  The point is that $c_0$ is a CW complex, and ${\cal K}^0$ a {\it cellular sheaf} in the sense of \cite{T}, i.e. a sheaf whose restriction to every cell is constant. The obvious CW structure of $c_0$ has a 0-cell, namely $\{p_0\}$ and  two 1-cells, denoted   $\mu_0$, $\nu_0$ (the  connected components of $c_0\setminus\{p_0\}$).  According to the main result of \cite{T}, the cohomology of a cellular sheaf can be computed using a cochain complex constructed in a similar way  as the usual cellular cochain complex which computes the singular cohomology.  Using this result we obtain (2) and (3).
The last statement is obtained  using the Leray spectral sequence associated with the projection $\pi:M\to{\cal C}$
\end{proof}  {\ }\vspace{-3mm}\\
{\it Local solvability theorems:}\vspace{-3mm}\\

We could not find in the literature the local solvability results used above, so we state them here for completeness, indicateing only briefly the method of proof:

 \begin{thry} \label{newtheorem} Let $X=-y\frac{\partial}{\partial x}+x\frac{\partial}{\partial y}$ and $r\in(0,\infty]$. A function $u\in {\cal C}^\infty(B_r,\R)$ belongs to $\im(P_X:{\cal C}^\infty(B_r,\R)\to {\cal C}^\infty(B_r,\R))$ if and only if 
\begin{equation}\label{int}\int_{0}^{2\pi} u(\rho\cos(t),\rho\sin(t))dt=0\ \forall \rho\in [0,r)\ .
\end{equation}
\end{thry}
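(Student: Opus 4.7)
The necessity direction is immediate: if $g\in\mathcal{C}^\infty(B_r,\R)$ solves $P_X g=u$, then on each concentric circle $t\mapsto(\rho\cos t,\rho\sin t)$, the tangent vector coincides with $X$, so $u(\rho\cos t,\rho\sin t)=\frac{d}{dt}g(\rho\cos t,\rho\sin t)$. Integrating over $[0,2\pi]$ and using periodicity gives (\ref{int}). So the real content is the sufficiency direction.

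For sufficiency, the naive approach is to pass to polar coordinates, where $X=\partial/\partial\theta$, and set $g(\rho,\theta):=\int_0^\theta u(\rho\cos s,\rho\sin s)\,ds$. The mean-zero condition (\ref{int}) ensures that this function is $2\pi$-periodic in $\theta$ and hence defines a function on $B_r\setminus\{0\}$, which manifestly satisfies $X(g)=u$ there. The hard part — and the only real difficulty — is showing that this $g$ extends smoothly across the origin, the unique fixed point of the flow. Directly analyzing the behavior of the polar-coordinate antiderivative near $\rho=0$ is awkward; I would instead bypass the issue entirely by a global averaging trick.

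Let $\varphi_t$ denote the flow of $X$, i.e.\ rotation by angle $t$, which is globally defined on $B_r$ and $2\pi$-periodic. Define
$$
g(p):=\frac{1}{2\pi}\int_0^{2\pi} t\cdot u(\varphi_t(p))\,dt.
$$
Since the integrand is smooth jointly in $(t,p)$ on the compact product $[0,2\pi]\times\bar{B}_\rho$ for every $\rho<r$, the function $g$ is smooth on all of $B_r$, origin included. To verify $X(g)=u$ at an arbitrary $p\in B_r$, compute $g(\varphi_s(p))$ by the change of variables $\tau=t+s$:
$$
g(\varphi_s(p))=\frac{1}{2\pi}\int_s^{2\pi+s}\tau\,u(\varphi_\tau(p))\,d\tau-\frac{s}{2\pi}\int_s^{2\pi+s}u(\varphi_\tau(p))\,d\tau.
$$
By $2\pi$-periodicity of $\varphi_\tau$ and hypothesis (\ref{int}) applied at the radius $\|p\|$, the second integral equals $\int_0^{2\pi} u(\varphi_\tau(p))\,d\tau=0$. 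Differentiating the remaining first integral at $s=0$ using the fundamental theorem of calculus yields $X(g)(p)=u(\varphi_{2\pi}(p))=u(p)$, as desired.

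The principal obstacle I expected — smooth extension across the fixed point of the flow — is thus absorbed into the regularity of a parameter-dependent integral with fixed compact domain of integration; no separate analysis at the origin is needed. (One can double-check consistency at $p=0$: since $\varphi_t(0)=0$ the derivative $\frac{d}{ds}g(\varphi_s(0))$ vanishes identically, but this is compatible because the mean-zero condition at $\rho=0$ forces $u(0)=0$.) The same averaging formula also provides, more generally, a smooth right inverse to $P_X$ restricted to the subspace of functions satisfying (\ref{int}), though only the existence statement is needed here.
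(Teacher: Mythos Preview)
Your argument is correct. The necessity direction matches the paper's, and your sufficiency argument via the weighted flow average
\[
g(p)=\frac{1}{2\pi}\int_0^{2\pi} t\,u(\varphi_t(p))\,dt
\]
is valid: smoothness of $g$ on all of $B_r$ is immediate from differentiation under a fixed-domain integral, and your computation of $\frac{d}{ds}\big|_{s=0} g(\varphi_s(p))$ correctly yields $u(p)$ once the mean-zero hypothesis kills the second term.

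The paper's route is in fact the one you call ``naive'': it passes to polar coordinates, takes on each circle the antiderivative of $u$ normalized by $\int_0^{2\pi} g(\rho\cos t,\rho\sin t)\,dt=0$, and then asserts that ``explicit computations'' show this $g$ is smooth across the origin. A short calculation (integration by parts in $\theta$) shows that your $g$ and the paper's $g$ are the \emph{same} function; what your packaging buys is that smoothness at the fixed point is automatic from the integral representation, so the separate verification the paper alludes to is unnecessary. Conversely, the paper's formulation makes the normalization $\int_0^{2\pi} g\,d\theta=0$ transparent, which is useful if one wants a canonical right inverse. So the two arguments construct the same solution, but yours handles the only delicate point (regularity at $0$) more cleanly.
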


The condition is obviously necessary. In order to prove that it is also sufficient, we use polar coordinates and we solve the equation $\frac{\partial}{\partial \theta} g=u$ on the circles $C(0,\rho)$ ($0\leq\rho < r$) with the condition 
$$\int_0^{2\pi} g(\rho (\cos(t), \sin(t)))dt = 0\ \ \forall \rho\in[0,r) \ .
$$
Explicit computations show that the function $g:B(0,r)\to \R$ is smooth and solves the equation $Xg=u$.
\\

For the vector field   $X=x\frac\partial{\partial x}-y\frac\partial{\partial y}$ the solvability problem has a completely different answer. This germ of this vector field at 0 is hyperbolic, so the corresponding {\it  flat problem} (see \cite{Rou}, sections I.2.2, II.2.1)) is always solvable. Using  the method explained in the introduction of \cite{Rou}  (page 15),  we see that the equation $X(g)=u$ has a ${\cal C}^\infty$ solution around 0 if and only if it has a formal solution.   The first order operator $P_X$ associated with $X$ commutes with the hyperbolic order 2-operator $Q:=\frac{\partial^2}{\partial x\partial y}$, and we see easily that a formal solution exists if and only if
\begin{equation}\label{solv}
(Q^nu)(0)=0\ ,\ \forall n\in\N\ .
\end{equation}
Therefore (\ref{solv}) is  the ${\cal C}^\infty$ solvability condition around 0 for the equation $X(g)=u$.
In the proof of Theorem  \ref{Th}  we need a simple consequence of this criterion.
Let $u\in {\cal C}^\infty(\R^2,\R)$. We define the functions 
$\phi_u^{++}\ ,\ \phi_u^{--}:(0,1)\to\R\ ,\ \phi_u^{+-}\ ,\ \phi^{-+}_u:(-1,0)\to\R$
 by
$$\phi_u^{++}(\rho)=\int_{\ln(\rho)}^0 u(e^t,\rho e^{-t}) dt\ ,\ \phi_u^{--}(\rho)=\int_{\ln(\rho)}^0 u(-e^t,-\rho e^{-t}) dt\ ,
$$
$$\phi_u^{+-}(\rho)=\int_{\ln(-\rho)}^0 u(e^t,\rho e^{-t}) dt\ ,\ \phi_u^{-+}(\rho)=\int_{\ln(-\rho)}^0 u(-e^t,-\rho e^{-t}) dt\ .
$$

Note that,  for $\rho\in(0,1)$ (respectively $\rho\in(-1,0)$)   $\phi_u^{++}(\rho)$ and $\phi_u^{--}(\rho)$ (respectively $\phi_u^{+-}(\rho)$ and $\phi_u^{-+}(\rho)$)  are defined by integrating $u$ along the  two arcs obtained by intersecting the  hyperbola $H_\rho:=\{(x,y)\in\R^2|\ xy=\rho\}$ (parameterized as integral curve of $X$) with the square $[-1,1]\times[-1,1]$. Since these   parameterized arcs do not have a limit as $\rho\to 0$,  we cannot expect the four functions to extend continuously at 0. But the geometric interpretation of the four functions (as integrals along integral curves of $X$) shows that, if $u=P_Xg$ for a smooth function $g$ defined around this square, we will have
\begin{equation}\label{diff}\phi_u^{++}(\rho)=g(1,\rho)-g(\rho,1)\ ,\ \phi_u^{--}(\rho)=g(-1,-\rho)-g(-\rho,-1)\ ,\ \rho\in(0,1)\ ,
\end{equation}
$$\phi_u^{+-}(\rho)=g(1,\rho)-g(-\rho,-1)\ ,\ \phi^{-+}_u(\rho)=g(-1,-\rho)-g(\rho,1)\ , \ \rho\in(-1,0)\ .
$$
Therefore, if $u=P_Xg$ for a smooth function $g$ defined around $[-1,1]\times[-1,1]$, then  the functions $\phi_u^{++}$, $\phi_u^{--}$, $\phi_u^{+-}$,   $\phi^{-+}_u$ extend  ${\cal C}^\infty$  at 0. The same is true when $u$ can be written as $P_Xg$ on a smaller square $[-r,r]\times[-r,r]$  with $r>0$, so when the equation $P_X g=u$  is locally ${\cal C}^\infty$-solvable around the origin.  The following theorem states that the converse is also true.

\begin{thry}\label{many} Let $u\in{\cal C}^\infty(\R^2,\R)$. The following conditions are equivalent:  
\begin{enumerate}

\item The functions $\phi_u^{++}$, $\phi_u^{--}$, $\phi_u^{+-}$,   $\phi^{-+}_u$ extend  ${\cal C}^\infty$  at 0.
\item One of the four functions $\phi_u^{++}$, $\phi_u^{--}$, $\phi_u^{+-}$,   $\phi^{-+}_u$ extends  ${\cal C}^\infty$  at 0. 
\item The derivatives of $u\in{\cal C}^\infty(\R^2,\R)$ at 0 satisfy  (\ref{solv})
\item The equation $P_X g=u$ has a ${\cal C}^\infty$-solution around 0.
\end{enumerate}
\end{thry}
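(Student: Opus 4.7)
\textbf{Proof proposal for Theorem \ref{many}.} The authors have already noted that the implications $(4)\Rightarrow(1)$ and $(3)\Leftrightarrow(4)$ are in place: the first follows from the explicit relations (\ref{diff}) displayed just before the statement, and the second is the Roussarie flat-problem argument recalled in the paragraph on the hyperbolic vector field. Since $(1)\Rightarrow(2)$ is tautological, the only substantive step to close the cycle is $(2)\Rightarrow(3)$. I will do this by isolating the logarithmic singularities in the asymptotic expansion of whichever one of the four functions is assumed to extend smoothly.

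Suppose for definiteness that $\phi_u^{++}$ extends $\mathcal{C}^\infty$ at $0$. Performing the change of variable $s=e^t$ gives
$$
\phi_u^{++}(\rho)=\int_\rho^1 \frac{u(s,\rho/s)}{s}\,ds,\quad \rho\in(0,1).
$$
Fix $N\in\N$ and apply Taylor's formula at the origin,
$u(x,y)=\sum_{i+j\le N}\frac{a_{ij}}{i!\,j!}x^i y^j+R_N(x,y)$,
with $a_{ij}=\partial_x^i\partial_y^j u(0,0)$ and $R_N$ a smooth remainder vanishing to order $N+1$ at $0$. Inserting the monomial $x^iy^j$ into the integrand produces $\rho^j\int_\rho^1 s^{i-j-1}\,ds$, which equals $\rho^j(1-\rho^{i-j})/(i-j)$ when $i\neq j$ (a polynomial in $\rho$) and $-\rho^j\ln\rho$ when $i=j$. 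Consequently
$$
\phi_u^{++}(\rho)=-\ln\rho\cdot\sum_{2j\le N}\frac{a_{jj}}{(j!)^2}\rho^j \;+\; P_N(\rho)\;+\;\Phi_{R_N}(\rho),
$$
where $P_N$ is a polynomial and $\Phi_{R_N}(\rho):=\int_\rho^1 R_N(s,\rho/s)\,ds/s$.

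The routine but essential technical estimate is that $\Phi_{R_N}$ is of class $\mathcal{C}^{k(N)}$ at $0$ with $k(N)\to\infty$ as $N\to\infty$; this is obtained by differentiating under the integral sign and using the pointwise bounds on $R_N$ and its derivatives together with the integrability of the resulting kernels on $[\rho,1]$. Granting this, and comparing with the hypothesis that $\phi_u^{++}$ is smooth at $0$, the function $\rho\mapsto -\ln\rho\sum_{2j\le N}\frac{a_{jj}}{(j!)^2}\rho^j$ must itself be of class $\mathcal{C}^{k(N)}$ at $0$. Since $\rho^j\ln\rho$ fails to be $\mathcal{C}^j$ at the origin, and the $\rho^j\ln\rho$ are linearly independent in the germ algebra modulo $\mathcal{C}^\infty$, each coefficient must vanish: $a_{jj}=0$ for all $j$ with $2j\le N$. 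Letting $N\to\infty$ gives $a_{jj}=\partial_x^j\partial_y^j u(0,0)=(Q^ju)(0)=0$ for every $j$, which is exactly condition (\ref{solv}). The cases $\phi_u^{--}$, $\phi_u^{+-}$, $\phi_u^{-+}$ reduce to the previous one by the changes of variable $(x,y)\mapsto(-x,-y)$ and $(x,y)\mapsto(\pm x,\mp y)$, under which the diagonal coefficients $a_{jj}$ are preserved up to sign and so the conclusion is the same.

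\textbf{Main obstacle.} The only delicate point is the smoothness statement for the remainder integral $\Phi_{R_N}$: one must differentiate through the integral and control the singularity of the integrand as $\rho\to 0^+$, using that $R_N$ vanishes to order $N+1$. A careful bookkeeping of the derivatives (both in $\rho$ and the boundary terms coming from the lower limit $s=\rho$) shows that each differentiation loses a bounded number of orders of vanishing, which is precisely what yields $k(N)\to\infty$. Once this is established, the extraction of the vanishing conditions on the $(Q^ju)(0)$ from the incompatibility of $\rho^j\ln\rho$ with smoothness is immediate.
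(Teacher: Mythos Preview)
The paper does not actually supply a proof of Theorem~\ref{many}. The text preceding the statement establishes $(3)\Leftrightarrow(4)$ (the Roussarie flat-problem argument combined with the observation that formal solvability is equivalent to $(Q^nu)(0)=0$ for all $n$) and $(4)\Rightarrow(1)$ (formula~(\ref{diff})); the theorem is then stated as ``the converse is also true'' with no further argument. So there is nothing to compare your proof against: your proposal is supplying exactly the implication $(2)\Rightarrow(3)$ that the paper leaves open.

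Your argument for $(2)\Rightarrow(3)$ is correct. The change of variable $s=e^{t}$ and the monomial computation are right, and the key identity
\[
\phi_u^{++}(\rho)=-\ln\rho\sum_{2j\le N}\frac{a_{jj}}{(j!)^2}\,\rho^{j}\;+\;P_N(\rho)\;+\;\Phi_{R_N}(\rho)
\]
isolates precisely the diagonal coefficients $a_{jj}=(Q^{j}u)(0)$ as the coefficients of the logarithmic singularities. The reduction of the other three cases by the sign changes $(x,y)\mapsto(\pm x,\pm y)$ is also correct, since these substitutions multiply $a_{jj}$ by $(\pm1)^{2j}$ or $(\pm1)^{j}(\mp1)^{j}$ and hence preserve the vanishing condition.

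On the point you flag as the main obstacle: the regularity of $\Phi_{R_N}$ can be made precise, and in fact one gets $k(N)\sim N/2$. The clean way to see this is to push the Taylor expansion to order $2M$ rather than $N$: the only terms producing $\rho^{m}\ln\rho$ are the diagonal monomials $(xy)^{m}$, so writing $R_{2M}$ via Hadamard as $\sum_{i+j=2M+1}x^{i}y^{j}h_{ij}(x,y)$ (where no term has $i=j$) one checks that each summand $\rho^{j}\int_{\rho}^{1}s^{i-j-1}h_{ij}(s,\rho/s)\,ds$ is $C^{M}$ at $0$. This avoids the bookkeeping you describe and gives directly that if $\phi_u^{++}$ is $C^{M}$ at $0$ then $a_{00}=a_{11}=\cdots=a_{MM}=0$. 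Either way, your outline closes the equivalence and goes beyond what the paper provides.
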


  \end{document}